\documentclass[11pt]{article}
\usepackage[utf8]{inputenc}
\usepackage[letterpaper]{geometry}
\usepackage{amsfonts, mathtools, amsmath, amssymb, amsthm}
\usepackage{hyperref, bookmark, tabulary, xstring, cleveref, keytheorems, multicoltab}
\usepackage[all]{hypcap}
\usepackage[backend=biber, style=alphabetic]{biblatex}
\usepackage{listings, xcolor, authblk, enumitem, multicol}
\usepackage{graphicx}
\graphicspath{{images/}}

\title{On the evenness of coefficients of modular forms}
\author{Sayantan Santra}
\affil{Department of Mathematics, University of Oklahoma, \\ Norman, OK 73019, United States}
\date{}
\newkeytheorem{theorem}

\newkeytheorem{corollary}

\newkeytheorem{lemma}

\newkeytheorem{definition}

\newcommand{\suminf}{\sum_{n \ge 1}}
\newcommand{\ZZ}{\mathbb{Z}}
\newcommand{\QQ}{\mathbb{Q}}

\newcommand{\FF}{\mathbb{F}}

\newcommand{\modrep}[1]{\tilde{\rho}_{#1}}
\newcommand{\mf}{\mathfrak}
\newcommand{\mc}{\mathcal}

\newcommand{\gl}[1]{GL_2(#1_\ell)}
\newcommand{\absgalgrp}{Gal(\bar{\QQ}/\QQ)}

\newcommand{\say}[1]{``#1"}

\begin{document}
\maketitle

\begin{abstract}
	We study the density of primes such that \(a_p\) is divisible by a prime above \(2\) for modular forms \(f = \suminf a_n q^n\) of small
	levels. First, we reduce the problem to low weight and odd level, or a mod \(2\) CM type form. Then, we provide calculations for the
	evenness of particular newforms.
	In particular, we provide a list of levels where this \say{evenness} can be computed exactly for all the forms with trivial
	nebentypus.
\end{abstract}

\section{Introduction}
Let \(f(z) = \suminf a_n q^n\) be a newform of level \(N\), weight \(k\), and trivial nebentypus.
We fix a prime \(\lambda\) lying over \(2\) in the form's ring of coefficients, and define \(\lambda\)-evenness as divisibility by that
prime. (We usually omit the \(\lambda\) when it can be understood from context.)

From numerical data, it can be observed that for \(N = 2^s M\) where \(M\) is a prime, there are a lot of even coefficients \(a_p\)
where \(p\) is a prime away from the level. In particular for small primes \(M = 2, 3, 5\), and \(7\), all of the \(a_p\)s are even.

The latter observation is explained using the Deligne--Serre construction (\autoref{modrep_def}) of a mod \(2\) representation attached
to a naive mod \(2\) modular form. Here, by \say{naive mod \(2\) modular forms}, we mean the reduction of a elliptic modular form modulo
some prime \(\lambda\) lying over \(2\) in its coefficient ring.

Using computational data (provided in \autoref{computational-data}), we observe a notable bias towards evenness for newforms
of small levels. This can be seen in \autoref{stat_plot} provided on the next page.

While it is not enough to say that this tendency persists for higher levels, it raises an interesting problem for
future research: to what extent does this bias persist for general levels? The methods used in this paper are not sufficient to answer that
question, but further investigation using a larger sample of levels may allow of better insight. It would also be interesting to see what
happens beyond the case of elliptic modular forms e.g. Hilbert modular forms or Siegel modular forms.

Another question worth investigation might be whether there are infinitely many odd levels where all newforms of that level have evenness \(1\).
The observations made in this paper are insufficient to make a strong conjecture in either direction, but \autoref{stat_plot} does seem to
suggest that the answer is affirmative.

\medbreak

\begin{figure}[htbp]
	\noindent\rule{\linewidth}{0.4pt}
	\centering
	\includegraphics[width=0.9\textwidth]{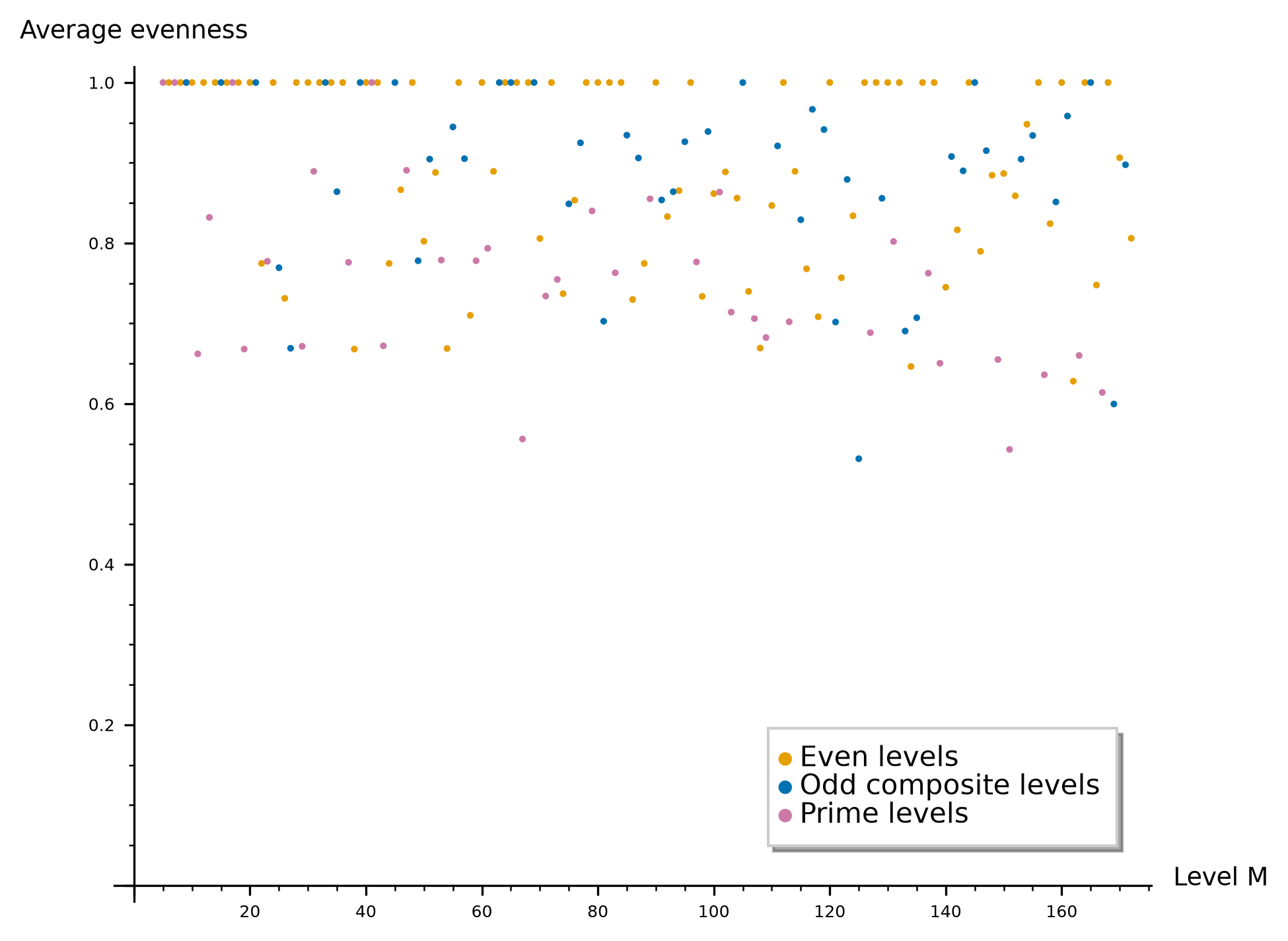}
	\caption{Average evenness of newforms by level. Prime levels are shown in purple, odd composite levels in blue, and even levels in
		orange. This plot illustrates the bias towards evenness for small levels. }
	\label{stat_plot}
	\noindent\rule{\linewidth}{0.4pt}
\end{figure}

\medbreak

The main results of this paper are the following.

\begin{definition}
	For a newform \(f = \suminf a_n q^n\), and a prime \(\lambda \mid 2\), we define its \(\lambda\)-evenness \(\mc E(f)\) as the density of primes
	\(p\) where \(\lambda \mid a_p\).
\end{definition}

\begin{theorem}
	\label{pushdown_theorem}
	Let \(f = \suminf a_n q^n\) be a newform of trivial nebentypus, and level \(2^s M\) for some prime \(M\), and integer \(s \ge 0\).
	Then, one of the following must be true.
	\begin{enumerate}[label=(\alph*)]
		\item \(f\) is congruent modulo 2 to some cusp form \(g\) of level \(M\) or \(2M\), weight 2, and trivial nebentypus; or,
		\item \(f\) is of CM type mod 2 by \(\QQ(i)\) and has \(\mc E(f) \ge 0.5\).
	\end{enumerate}
\end{theorem}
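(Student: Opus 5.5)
We consider the mod \(2\) Galois representation \(\modrep{f} \colon \absgalgrp \to GL_2(\bar{\FF}_2)\) attached to \(f\) and \(\lambda\) by the Deligne--Serre construction (\autoref{modrep_def}), and split according to whether it is irreducible.

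\textbf{Irreducible case.} Here we invoke Serre's conjecture, now a theorem of Khare--Wintenberger, in its refined form. It says \(\modrep{f}\) arises from a form of level \(N(\modrep{f})\), the prime-to-\(2\) Artin conductor, and weight \(k(\modrep{f})\). Since \(f\) has level \(2^s M\), the conductor divides \(M\), so it equals \(1\) or \(M\). Conductor \(1\) is impossible, because there are no irreducible mod \(2\) representations unramified outside \(2\) (Tate). Hence the conductor is exactly \(M\).

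For the weight, mod \(2\) every Serre weight can be reduced to \(k(\modrep{f}) \in \{2,4\}\). Weight \(4\) at level \(M\) can be traded for weight \(2\) at level \(2M\) by the standard weight-lowering argument: multiply by the Hasse invariant \(E_{1}\)-analogue, or use Serre's trick that level \(2M\), weight \(2\) captures the weight-\(4\) level-\(M\) forms mod \(2\) (Ribet/Carayol level raising at \(2\)). This gives a weight-\(2\) form \(g\) of level \(M\) or \(2M\) with trivial nebentypus and \(g\equiv f \pmod{\lambda}\) on all \(a_p\), \(p\nmid 2M\). The nebentypus is trivial because the character of \(g\) reduces to \(\det\modrep{f}\) divided by the cyclotomic character, which is trivial mod \(2\). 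Since all characters of order prime to \(2\) are detected mod \(2\), we can take the nebentypus of \(g\) trivial after lifting, or modulo a \(2\)-power order character that becomes trivial mod \(\lambda\). This is the step I would check most carefully.

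\textbf{Reducible case.} The semisimplification is \(\chi_1 \oplus \chi_2\), with \(\chi_i\) characters of odd order of conductor dividing \(M\). Their product is trivial mod \(2\), so \(\chi_2 = \chi_1^{-1}\) and \(a_p \equiv \chi(p) + \chi(p)^{-1}\). If \(\chi\) is trivial, every \(a_p\) is even, so the semisimplification is the same as that of the Eisenstein series of weight \(2\) and level \(M\) mod \(2\). In the borderline case, where no cusp form is congruent, we would instead argue directly that \(\mc E(f)=1\).

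I expect the main obstacle to be producing the CM-by-\(\QQ(i)\) alternative in (b). The natural source is the dihedral irreducible case, where \(\modrep{f}\) is induced from \(\QQ(i)\), or from another field ramified only at \(2\) and \(M\). When the image is dihedral, induced from a quadratic field \(K\) unramified at \(M\)-part of the conductor in a way that Serre's level prediction fails to capture at level \(M\) or \(2M\) with trivial character, we get \(a_p \equiv 0\) for all \(p\) inert in \(K\). That gives \(\mc E(f) \ge 0.5\) by Chebotarev. A careful local analysis at \(2\) is needed: showing that such exceptions can only occur for \(K=\QQ(i)\) (the conductor must be a power of \(2\), leaving \(\QQ(i),\QQ(\sqrt{\pm2})\), with \(\QQ(\sqrt{\pm 2})\) excluded by the trivial-nebentypus and weight constraints). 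I would try this via Buzzard's and Schoof-type classifications of small-conductor dihedral mod \(2\) representations.
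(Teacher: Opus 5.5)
There is a genuine gap, and it sits at the heart of the dichotomy. The gap is the passage from Serre weight \(4\) at level \(M\) to weight \(2\) at level \(2M\) with trivial character. You assert this unconditionally in the irreducible case, but neither of your justifications establishes it. Multiplying by a Hasse invariant raises the weight; it does not lower it. There is also no unconditional ``level raising at \(2\)'' that does this: for \(\ell=2\) the usual Teichmüller-twist comparison between weight \(k\) and weight \(2\) runs through characters of \((\ZZ/4\ZZ)^*\) (compare \(q=4\) in the paper's Lemma).

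This is exactly where the paper needs an extra input. After reaching level \(M\), trivial nebentypus and weight \(2\) or \(4\) (its Hatada-type Lemma plus Edixhoven), it invokes Carayol's Prop.~3. That result gives weight \(2\) at level \(2M\) except when \(f\) is CM mod \(2\) by \(\QQ(i)\). In that exceptional case \(a_p\equiv 0\) for \(p\equiv 3\pmod 4\), and Dirichlet's theorem gives \(\mc E(f)\ge 1/2\).

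Your last paragraph in effect concedes that the conversion can fail for dihedral \(\bar\rho\), which contradicts your irreducible-case argument. It then only outlines how one might show the failures are confined to \(K=\QQ(i)\). The exclusion of \(\QQ(\sqrt{\pm 2})\) ``by the trivial-nebentypus and weight constraints'' is asserted, not argued. So for the weight-\(4\) forms you establish neither (a) nor (b), and that is the content of the theorem.

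Your reducible case also has a hole: you never treat \(\chi\ne 1\). This case cannot occur, and showing so closes the gap:
\begin{enumerate}[label=(\roman*)]
	\item \(M\) is an odd prime exactly dividing the level and the nebentypus is trivial, so \(f\) is an unramified twist of Steinberg at \(M\).
	\item Hence inertia at \(M\) acts unipotently and \(\chi\) is unramified at \(M\). Equivalently, \(\chi\oplus\chi^{-1}\) would otherwise have conductor \(M^2\nmid M\).
	\item Having odd order, \(\chi\) is also unramified at \(2\), so it is trivial. The case \(M=2\) works the same way.
\end{enumerate}
Then \(a_p\) is even for every \(p\nmid 2M\), in particular for \(p\equiv 3\pmod 4\). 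So \(f\) is CM mod \(2\) by \(\QQ(i)\) in the paper's sense, and (b) holds with \(\mc E(f)=1\). No Eisenstein-series or ``borderline'' discussion is needed.

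Finally, your use of the strong form of Serre's conjecture together with Tate's theorem is a legitimate substitute for the paper's Hatada-type Lemma and Edixhoven's weight bound. It does reach level \(M\), weight \(2\) or \(4\), and trivial character, since the predicted character is the Teichmüller lift of \(\det\bar\rho\), which is trivial. It does not, however, supply the missing weight-\(4\) step.
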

By CM mod 2 by \(\QQ(i)\), we mean forms such that its Fourier coefficients \(a_p\) are congruent to \(0\) modulo \((i+1)\) for every prime
\(p \equiv 3 \pmod{4}\).

It should be noted that this theorem essentially follows from \autocite{hatada_1998}, but no direct references for this were found in the
literature. In that paper, the nebentypus of the form at the lower level wasn't explicitly shown to be trivial, which this paper does.

\begin{corollary}
	\label{small_primes_corollary}
	Let \(M = 2, 3, 5\), or 7 in \autoref{pushdown_theorem}. Then, \(f\) has even coefficients \(a_p\) for \(p \nmid 2 M\).
\end{corollary}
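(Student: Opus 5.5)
We prove the corollary by applying \autoref{pushdown_theorem} and handling its two alternatives separately; in both, the aim is to show that the mod \(\lambda\) Galois representation \(\modrep{f}\) attached to \(f\) by the Deligne--Serre construction (\autoref{modrep_def}) is reducible, hence has trivial semisimplification.

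\emph{Case (a).} Here \(f \equiv g \pmod{\lambda}\) with \(g\) a cusp form of weight \(2\), trivial nebentypus, and level \(M\) or \(2M\), for \(M \in \{2,3,5,7\}\). The levels that occur are \(2,3,4,5,6,7,10,14\). We use the classical dimension formula for \(S_2(\Gamma_0(N))\), which equals the genus of \(X_0(N)\); it vanishes for every \(N \le 10\) and for \(N=14\) it equals \(1\). So for \(M=2,3,5\), and for \(M=7\) at level \(7\), we get \(g=0\). Then \(f\equiv 0\), and every \(a_p\) is even. This is really the statement that \(\modrep{f}\) is zero, or equivalently that \(\modrep{f}\) is reducible with trivial semisimplification, which forces \(a_p \equiv 2 \equiv 0\).

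The one remaining subcase is \(M=7\) with \(g\) in the one-dimensional space \(S_2(\Gamma_0(14))\). That space is spanned by the newform \(\eta(z)\eta(2z)\eta(7z)\eta(14z)\), which corresponds to the elliptic curve \(14a\). This curve has a rational \(2\)-torsion point; indeed \(X_0(14)\) has rational torsion of order \(6\). So its mod \(2\) representation is reducible, with trivial semisimplification. By Eichler--Shimura, \(a_p \equiv 1+p \equiv 0 \pmod 2\) for \(p\nmid 14\). Alternatively, one can check this directly from the \(q\)-expansion via a Sturm bound computation. Thus \(a_p(f)\equiv a_p(g)\equiv 0\).

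\emph{Case (b).} Here \(f\) is CM mod \(2\) by \(\QQ(i)\). Then \(\modrep{f}\) is induced from a character of \(\mathrm{Gal}(\bar\QQ/\QQ(i))\) unramified outside \(2M\). Since \(\modrep{f}\) is unramified outside \(2M\), its image is a \(2\)-group extension of a small group. We want to show that it is reducible, which amounts to showing there is no suitable dihedral extension. Since \(\QQ(i)\) has class number \(1\), the ray class groups of \(\QQ(i)\) of conductor supported on \(2M\) have odd part trivial for these \(M\). The relevant quotient is \((\ZZ[i]/M)^\times/\langle i\rangle\), whose order is \(4\) for \(M=5\), \(12\) for \(M=7\), \(2\) for \(M=3\), and has trivial odd part for \(M=3,5\). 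For \(M=7\) there is a piece of order \(3\) to rule out. A character of order \(3\) of that type would give an \(S_3\)-extension, i.e.\ a dihedral representation of conductor dividing \(2^a\cdot 49\). Its Serre weight and level force it to come from weight \(2\) and level \(7\) or \(14\). By case (a), that representation is reducible, a contradiction. So the image is a \(2\)-group, \(\modrep{f}\) is reducible with trivial semisimplification, and \(a_p\) is even.

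The main obstacle is this last step: pinning down the possible CM characters for \(M=7\) and excluding the order-\(3\) dihedral possibility. If that argument is delicate, my fallback is to invoke Serre's conjecture (Khare--Wintenberger) directly, which places any irreducible \(\modrep{f}\) at level \(M\) or \(2M\) and weight \(2\), and so reduces it to case (a).
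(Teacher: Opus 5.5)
Your case (a) is correct, and more careful than the paper. The paper's entire proof is that there are no nonzero weight-\(2\) cusp forms at the relevant levels, so the corollary ``follows summarily'' from \autoref{pushdown_theorem}. That overlooks \(S_2(\Gamma_0(14))\), spanned by 14.2.a.a, which your rational \(2\)-torsion argument on \(14a\) handles. It also never addresses alternative (b), which by itself only gives \(\mc E(f)\ge 0.5\).

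The gap is in your treatment of (b).
\begin{itemize}
\item The claim that ray class groups of \(\QQ(i)\) with modulus supported on \(2M\) have trivial odd part is false as stated, since \((\ZZ[i]/M^n)^\times\) has order divisible by \(M^{2(n-1)}\). Working only with \((\ZZ[i]/M)^\times\) presupposes a bound on the conductor at \(M\) that you never derive.
\item At \(M=7\) the order-\(3\) piece is misidentified. The elements of order \(3\) in \((\ZZ[i]/7)^\times\cong\FF_{49}^\times\) lie in \(\FF_7^\times\), where complex conjugation (the Frobenius of \(\FF_{49}\)) acts trivially. So a cubic \(\psi\) of that type equals \(\chi\circ N_{\QQ(i)/\QQ}\) for the cubic Dirichlet character \(\chi\) of conductor \(7\), and \(\mathrm{Ind}_{\QQ(i)}^{\QQ}\psi\cong\chi\oplus\chi\chi_{-4}\) is abelian, not an \(S_3\)-representation.
\item Your contradiction via case (a) only excludes irreducible representations, so it does not touch this possibility, and ``the image is a \(2\)-group'' does not follow. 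Moreover, an induction from \(\QQ(i)\) of a character ramified at the inert prime \(7\) has conductor exponent at least \(2\) at \(7\). Its Serre level would therefore be divisible by \(49\), not \(7\) or \(14\).
\item The Khare--Wintenberger fallback has the same blind spot. It says nothing about a reducible \(\modrep{f}\) with nontrivial semisimplification, which is exactly what must be ruled out.
\end{itemize}

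The missing ingredient is the local condition at \(M\). For \(M=2\) there is nothing to check. For odd \(M\), since \(M\) exactly divides the level and the nebentypus is trivial, the local component of \(f\) at \(M\) is an unramified twist of Steinberg. Hence \(\rho_{f,2}|_{I_M}\), and so \(\modrep{f}|_{I_M}\), is unipotent.

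It follows that every character occurring in \(\modrep{f}^{ss}\), or in \((\modrep{f}|_{\mathrm{Gal}(\bar{\QQ}/\QQ(i))})^{ss}\cong\psi\oplus\psi^c\) in the CM case, is unramified outside \(2\). Such a character has odd order, since its values lie in \(\bar{\FF}_2^\times\). It factors through \((\ZZ/2^a\ZZ)^\times\), respectively through the ray class group of \(\QQ(i)\) of modulus \((1+i)^n\). Both are \(2\)-groups, the latter because \(\QQ(i)\) has class number \(1\). Hence the character is trivial, \(\modrep{f}^{ss}\cong 1\oplus 1\), and \(a_p\equiv 1+1\equiv 0\).

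For the cubic piece alone you could also observe that \(\det(\chi\oplus\chi\chi_{-4})\equiv\chi^2\ne 1\). This contradicts \(\det\modrep{f}=1\), which holds because the mod \(2\) cyclotomic character is trivial. With this step in place your proof is complete, and it covers ground the paper's one-line argument skips.
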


Note that the theorem below covers all odd \(M \le 47\).

\begin{theorem}
	\label{stat_theorem}
	Let \(f\) be a newform of trivial nebentypus, and level \(2^s M\) for some odd number \(M\), and integer \(s \ge 0\).
	\begin{enumerate}[label=(\alph*)]
		\item If \(M \in \{ 1, 3, 5, 7, 9, 15, 17, 21, 33, 39, 41, 45, 65, 69, 105, 145, 165 \}\), then \(\mc E(f) = 1\).
		\item If \(M \in \{ 11, 19, 27, 29, 43 \}\), then \(\mc E(F) = \frac{2}{3}\).
		\item If \(M \in \{ 13, 23, 25, 31, 35, 37, 47, 51, 55, 57, 77, 85, 95, 111, 117, 141, 155 \}\), then
		      \(\mc E(f) = 1\), or \(\mc E(f) = \frac{2}{3}\).
	\end{enumerate}
\end{theorem}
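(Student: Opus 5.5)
Our plan is to reduce \autoref{stat_theorem} to a finite computation with mod \(2\) Galois representations. The first step is to strip the \(2\)-power from the level, generalizing \autoref{pushdown_theorem} from prime \(M\) to odd \(M\). For a newform \(f\) of level \(2^s M\) with trivial nebentypus, consider the semisimple residual representation \(\modrep{f}\colon \absgalgrp \to GL_2(\bar{\FF}_2)\) from \autoref{modrep_def}. Its Artin conductor away from \(2\) divides \(M\). Its determinant is the mod \(2\) cyclotomic character raised to \(k-1\), which is trivial. We then apply level lowering at \(2\) (Ribet/Carayol/Diamond style) together with weight reduction (Serre's conjecture in its Khare--Wintenberger form, with the mod \(2\) weight recipe). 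This shows that either \(\modrep{f}\) arises from a weight \(2\) form of level \(M\) or \(2M\) with trivial character, or \(\modrep{f}\) is reducible. Note that in characteristic \(2\), an irreducible induced representation from \(\QQ(i)\) is absorbed into the first case or into the CM-type case of \autoref{pushdown_theorem}. Since \(\mc E(f)\) depends only on the traces of \(\modrep{f}\) at Frobenius, it suffices to compute \(\mc E\) for the finitely many residual representations coming from weight \(2\) forms of levels \(M\) and \(2M\), plus the reducible ones.

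The second step handles the reducible or Eisenstein case. If \(\modrep{f} \simeq \chi_1 \oplus \chi_2\) with \(\chi_1\chi_2 = 1\) and conductor dividing \(M\), then \(\chi_2 = \chi_1^{-1}\) and \(\chi_1\) has odd order. We have \(a_p \equiv \chi_1(p) + \chi_1(p)^{-1}\), which vanishes mod \(\lambda\) exactly when \(\chi_1(p)^2 = 1\), i.e. when \(\chi_1(p) = 1\). So \(\mc E = 1/\mathrm{ord}(\chi_1)\) for nontrivial \(\chi_1\). When \(\chi_1\) is trivial, \(\mc E = 1\), which is the generic outcome of Eisenstein congruences.

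The third step handles the irreducible case, with image \(G\) a finite subgroup of \(GL_2(\FF_{2^r})\) of determinant \(1\), i.e. inside \(SL_2(\FF_{2^r})\). An element of \(SL_2\) in characteristic \(2\) has trace \(0\) if and only if it is unipotent, i.e. has order \(1\) or \(2\). By Chebotarev, \(\mc E(f)\) equals the proportion of elements of \(G\) of order dividing \(2\). Two cases are relevant:
\begin{itemize}
\item For \(G \cong SL_2(\FF_2) \cong S_3\) (projective image of an \(S_3\)-extension, i.e. the \(2\)-torsion field of an elliptic curve), this proportion is \(4/6 = 2/3\).
\item For dihedral images or larger \(SL_2(\FF_{2^r})\), the proportion is \(1/2 + 1/(2n)\) or roughly \(1/2^r\).
\end{itemize}
Thus evenness \(2/3\) corresponds exactly to the \(S_3\) case. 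For each listed \(M\), we will enumerate the newforms of weight \(2\), trivial character and level dividing \(2M\) (and \(M\)), using the computational data of \autoref{computational-data}. We identify each mod \(\lambda\) reduction by checking enough Hecke eigenvalues, up to a Sturm bound, to certify congruences and determine the image: Eisenstein-congruent gives \(1\), and \(S_3\) image (an \(\FF_2\)-rational \(2\)-torsion cubic field of discriminant supported on \(M\)) gives \(2/3\). The CM-by-\(\QQ(i)\) branch must also be shown to give \(\mc E = 1\) for these \(M\), or to collapse into an Eisenstein class. Part (b) then requires showing that no Eisenstein class occurs among the new-at-\(M\) forms; part (c) is the mixed situation.

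We expect the main obstacle to be the first step for composite odd \(M\) and \(s\) large. We must guarantee that the weight and \(2\)-level reductions preserve trivial nebentypus. We also must rule out residual images other than trivial and \(S_3\) at these levels, i.e. prove that no form at level \(M\) or \(2M\) has a residual image of order \(>6\). For this we would first try a discriminant bound argument: a Galois extension with image in \(SL_2(\FF_4)\) or a larger dihedral group, unramified outside \(2M\) with the tame and wild conductor constraints forced by weight \(2\), must not exist for these \(M\). We would check this against number field tables (Jones--Roberts), falling back on explicit Sturm-bound verification when the tables are inconclusive.
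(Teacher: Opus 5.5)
Your classification of the possible residual images is correct, and it is a cleaner route than the paper's case-by-case use of Ono--Taguchi, Daas and explicit $2$-torsion fields. Since $\det\bar\rho_f=1$, trace $0$ in $SL_2(\bar{\mathbb F}_2)$ means unipotent. Hence $\mathcal E(f)=1$ exactly when $\bar\rho_f^{\mathrm{ss}}$ is trivial, and an irreducible image gives $\mathcal E(f)\le 2/3$, with equality only for $S_3$. In the reducible case $\chi_1$ has odd order and $\mathrm{cond}(\chi_1)^2\mid M$, which forces $\chi_1=1$ for every listed $M$.

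The genuine gap is part (b). Your reduction to weight $2$ at level $M$ or $2M$, like the Serre/Edixhoven weight bounds behind it, only controls irreducible $\bar\rho_f$. A residually reducible newform of level $2^sM$ need not be visible among weight-$2$ forms of level $M$ or $2M$. So ``no Eisenstein class among the new-at-$M$ forms'' is not the right condition; you would need that no newform of any level $2^sM$ is residually reducible. That is false. The curve $y^2=x^3+6x^2-2x$ has $\Delta=2^8\cdot 11$ and $c_4=2^5\cdot 3\cdot 7$, so it has multiplicative reduction at $11$ and conductor $2^7\cdot 11$ (type III at $2$). Its rational $2$-torsion point $(0,0)$ makes $a_p$ even for every $p\nmid 22$, so the attached newform has $\mathcal E=1\neq 2/3$. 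Similarly, $y^2=x^3+12x^2-2x$, with $\Delta=2^9\cdot 19$, does the same for $M=19$. So (b) cannot be established as stated, by your plan or any other. The paper's proof passes over the same point by asserting, via Edixhoven, that reducible $\bar\rho$ also arise from weight $2$ or $4$ cusp forms of level $M$.

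There are also two smaller slips. First, asking that the CM-by-$\mathbb Q(i)$ branch ``give $\mathcal E=1$'' contradicts your own classification. An irreducible dihedral image $D_n$ has $\mathcal E=\tfrac12+\tfrac1{2n}<1$, so in (a) you must show that branch is empty, and in (b) and (c) that $n=3$. The paper avoids this branch by enumerating weight-$2$ and weight-$4$ forms of level dividing $M$, rather than trading weight $4$ for level $2M$. Second, the $S_3$ fields need not be unramified at $2$: the $2$-division field of $11a$ is the cubic field of discriminant $-44$.
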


The idea of connecting the Deligne--Serre representation attached to a modular form to its congruence properties was first developed by
Serre and Swinnerton-Dyer. In \autocite{swinnerton_dyer_1973}, Swinnerton-Dyer proved quite a few well-known congruences using Deligne--Serre
representations and categorized all the congruences for level 1 into three types. In \autocite{swinnerton_dyer_1977}, he further improved
these results to congruences modulo powers of odd primes. For a given modular form, the primes modulo which we can have a congruence
relation, are called \say{exceptional} for that form.

In \autocite{ono_1994}, Ono categorized all possible congruences for all levels modulo odd exceptional primes. One thing
missing from all of these papers are results modulo \(2\), due to complications arising in this case. This paper focuses on this special case.

As presented above, \autoref{pushdown_theorem} tells us that, for newforms of levels \(2^s M\), either it arises from a newform of level \(M\) weight 2,
or it is of CM type mod 2 by \(\QQ(i)\). Further, \autoref{small_primes_corollary} tells us that for \(M = 2,3,5,7\), the evenness is always 1.

When we look at newforms at higher levels, we notice unusually high evenness of their coefficients. For example, for levels 1 to
100, evenness \(\ge 0.8\) occurs 69.6\% of the time, and evenness \(\ge 0.5\) occurs 97.6\% of the time. We were able to theoretically
verify the validity of most of the computational data using different means, which we will outline later in the paper.

The work is heavily inspired by works of Hatada \autocite{hatada_1998}, and Ono--Taguchi \autocite{ono_taguchi_2005} about the modular
representations attached to modular forms. They were mostly interested in categorizing these attached representations, whereas we use them
to calculate the evenness.

\section{Preliminaries}
Let \(f\) be a newform of weight \(k\), level \(N\), and trivial nebentypus.

The Deligne--Serre construction \autocite{deligne_1971} allows us to attach a unique, semisimple, continuous, odd, linear representation

\[\rho_{f,\ell} : \absgalgrp \to \gl{\bar{\ZZ}}\] to \(f\).
Further, it is unramified outside \(N \ell\) such that for all primes \(p\) with \(p \nmid \ell N\), we have
\[tr{\rho_{f,\ell}(Frob(p))} = a_p,\]
\[\det{\rho_{f,\ell}(Frob(p))} = p^{k-1}.\]
In other words, the characteristic polynomial of \(\rho_{f,\ell}(Frob(p))\) is \(x^2 - a_p x + p^{k-1}\). Also, \(\chi_\ell (Frob(p)) = p\).

This gives us a way to study the modular forms by studying these representations.

\label{modrep_def}
Let \(\rho_{f,\ell}\) be the representation associated to a modular form \(f\) as described above. We write \(\modrep{f, \lambda}\) to
denote this representation reduced modulo some prime \(\lambda\) lying in the coefficient ring \(\mf O\) of \(f\) i.e.
\[\modrep{f, \lambda} : \absgalgrp \to GL_2(F_\lambda),\]
where \(F_\lambda\) is the residue field of \(\mf O\) at \(\lambda\).
As per \autocite[640]{ribet_1994}, this representation is unique up to semi-simplification.

In many places, we will write this as \(\modrep{f, \ell}\) and \(\lambda\) will be a prime lying over \(\ell\). We can also think of this
representation as being attached to a naive mod \(2\) modular form, i.e. the reduction of a elliptic modular form modulo \(2\).

\section{Proof of \autoref{pushdown_theorem}}
We want to prove the following lemma that is crucial to proving \autoref{pushdown_theorem}.
\begin{lemma}
	\label{level_lemma}
	Let \(f\) be a newform of trivial nebentypus, and level \(\ell^s M\) where \(\ell\) is a prime,
	\(M \ge 1, s \ge 0\), and \((M, \ell) = 1\). Then, there exists a newform \(f'\) of trivial nebentypus, and level \(M\) such that
	\(\modrep {f, \ell} = \modrep {f', \ell}\).
\end{lemma}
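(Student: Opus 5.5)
The plan is to deduce \autoref{level_lemma} from the refined form of Serre's modularity conjecture (Khare--Wintenberger, with the level-lowering results of Ribet, Carayol and, for \(\ell = 2\), Buzzard), applied to the semisimple representation \(\bar\rho = \modrep{f, \ell}\) of \autoref{modrep_def}. The statement only asks for the same mod \(\ell\) representation, so the weight of \(f'\) will be allowed to differ from \(k\). The argument has three steps: compute the Serre level of \(\bar\rho\), compute its Serre character, and then invoke modularity at the predicted level and character.

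\emph{Step 1 (the Serre level).} Suppose first that \(\bar\rho\) is irreducible. By definition the Serre level \(N(\bar\rho)\) is the prime-to-\(\ell\) Artin conductor of \(\bar\rho\). By Carayol's comparison of conductors, reducing \(\rho_{f,\ell}\) modulo \(\lambda\) can only decrease the local conductor at each prime \(q \ne \ell\). Since the conductor of \(\rho_{f,\ell}\) at \(q\) is the \(q\)-part of \(\ell^s M\), we get \(N(\bar\rho) \mid M\).

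\emph{Step 2 (the Serre character).} Since \(f\) has trivial nebentypus, \(\det \rho_{f,\ell} = \chi_\ell^{k-1}\). Hence \(\det \bar\rho = \bar\chi_\ell^{k-1}\), and the Serre character \(\varepsilon(\bar\rho)\), which is the prime-to-\(\ell\) part of \(\det\bar\rho\) after removing a power of the mod \(\ell\) cyclotomic character, is trivial. For \(\ell = 2\) this is especially transparent: \(\bar\chi_2\) is trivial, so \(\det\bar\rho = 1\). Oddness of \(\bar\rho\) follows from oddness of \(\rho_{f,\ell}\); for \(\ell = 2\) the condition is vacuous.

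\emph{Step 3 (modularity at the right level).} The strong form of Serre's conjecture, now a theorem, produces a newform \(g\) of level \(N(\bar\rho)\), weight \(k(\bar\rho)\) and trivial character with \(\modrep{g,\ell} \cong \bar\rho\). If \(N(\bar\rho) = M\) we take \(f' = g\) and are done. If \(N(\bar\rho)\) is a proper divisor of \(M\), then \(g\) gives an oldform at level \(M\). To obtain a genuine newform of level \(M\) I would try level raising (Ribet, Diamond--Taylor) at the primes \(q \mid M/N(\bar\rho)\). This requires the level-raising congruence \(a_q^2 \equiv (1+q)^2 q^{k-2} \pmod \lambda\), which I would check using the fact that \(f\) itself has the correct local behaviour at \(q\). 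Failing that, I would read the lemma as asserting a newform of level dividing \(M\); this is all that \autoref{pushdown_theorem} actually uses.

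\emph{The reducible case and the main obstacle.} The main obstacle is the case where \(\bar\rho\) is reducible, because Serre's conjecture does not apply there. In this case \(\bar\rho^{ss} = \varepsilon_1 \oplus \varepsilon_2\bar\chi_\ell^{k-1}\), where \(\varepsilon_1, \varepsilon_2\) are characters whose conductors divide \(M\) and \(\varepsilon_1\varepsilon_2 = 1\). For \(\ell = 2\) this collapses to \(1 \oplus 1\), since the characters have odd conductor and trivial product. I would then exhibit \(f'\) directly. At level \(1\) one can take \(\Delta\): it has \(a_p\) even for all odd \(p\), so its mod \(2\) representation has semisimplification \(1 \oplus 1\). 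At general level \(M\) I would look for a congruence between a cusp form and an Eisenstein series with the prescribed characters, using weight raising by multiplying by \(E_{\ell-1}\), which is congruent to \(1\) modulo \(\ell\), to land in a space where a cuspidal Eisenstein congruence exists. Making this precise, with trivial nebentypus and level exactly \(M\), is the step I expect to need the most care.
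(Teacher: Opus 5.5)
The gap is the reducible case. Your irreducible-case argument is a workable alternative in outline: Carayol's conductor bound, trivial Serre character, then the strong form of Serre's conjecture, which even gives the Serre weight that the paper only gets later from Edixhoven. But in the reducible case the reduction you propose is false. From \(\bar\rho^{ss} = \varepsilon_1 \oplus \varepsilon_2\) with odd conductors and \(\varepsilon_1\varepsilon_2 = 1\) you can only conclude \(\varepsilon_2 = \varepsilon_1^{-1}\), not that both characters are trivial. Here \(\varepsilon_1\) may be a nontrivial character of odd order with \(\mathrm{cond}(\varepsilon_1)^2 \mid M\).

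The paper's own table shows this happens: \(49.4.a.c\) and \(49.4.a.d\) have evenness \(1/3\). Since \(\det\bar\rho = 1\), \(a_p\) is even exactly when \(\bar\rho(\mathrm{Frob}_p)\) is unipotent. No irreducible finite subgroup of \(SL_2(\bar{\FF}_2)\) (dihedral of order \(2n\) with \(n\) odd, or a conjugate of \(SL_2(\FF_{2^r})\)) has exactly a third of its elements unipotent. By contrast, \(\bar\rho^{ss} = \varepsilon \oplus \varepsilon^{-1}\), with \(\varepsilon\) the cubic character of conductor \(7\), gives \(a_p \equiv \varepsilon(p) + \varepsilon(p)^{-1}\), which is even precisely when \(\varepsilon(p) = 1\). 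So in the reducible case \(\Delta\) has the wrong residual representation in general. For odd \(\ell\), your shape \(\varepsilon_1 \oplus \varepsilon_2 \bar\chi_\ell^{k-1}\) is also too rigid, since the cyclotomic exponents can split as \(a\) and \(k-1-a\).

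Under your fallback reading (level dividing \(M\)), \(\Delta\) does settle the case \(\bar\rho^{ss} = 1 \oplus 1\). For nontrivial \(\varepsilon\), however, you need a cusp eigenform of level dividing \(M\) with \(\bar\rho^{ss} = \varepsilon \oplus \varepsilon^{-1}\). The Eisenstein-congruence construction that would supply it is only announced, and Serre's conjecture says nothing about reducible representations.

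The missing idea is to argue with \(q\)-expansions rather than with \(\bar\rho\), which is what the paper does by adapting Hatada. By the Serre--Katz theory of \(\ell\)-adic modular forms, a form on \(\Gamma_0(\ell^s M)\) whose nebentypus decomposes as \(\psi\xi\omega\) is congruent modulo \(\lambda\) to a form of level \(M\), character \(\psi\), and weight \(k+\alpha\). For trivial nebentypus \(\psi = 1\), so \(f\) is congruent to a cusp form on \(\Gamma_0(M)\). The Deligne--Serre lifting lemma then gives a cusp eigenform on \(\Gamma_0(M)\) with the same eigenvalues modulo \(\lambda\) for \(p \nmid \ell M\), hence the same \(\bar\rho^{ss}\).

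This argument never distinguishes irreducible from reducible \(\bar\rho\), so it covers exactly the case you could not. It also needs neither Khare--Wintenberger nor level lowering. It, too, only produces a newform of level dividing \(M\), so your fallback reading matches what is actually proved.
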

\begin{proof}
	As mentioned before, this is a modification of \autocite[Theorem 2]{hatada_1998}.

	Indeed, if we start with a modular form \(f\) with trivial nebentypus \(\phi\), we can write \(\phi = \psi \cdot \xi \cdot \omega\)
	through
	\[(\ZZ / q \ell^s M \ZZ)^* \cong (\ZZ / M \ZZ)^* \times (1 + q\ZZ_{\ell}) / (1 + q \ell^s \ZZ_{\ell}) \times (\ZZ / q\ZZ)^*.\]

	Here, \(q = 4\) if \(\ell = 2\), and \(q = \ell\) otherwise.

	If we follow the computation in the cited paper, we get a newform on \(\ker{\psi}\) of weight \(k + \alpha\). Here, \(\ker{\psi}\) is the
	kernel of \(\psi\) considering it as a character of \(\Gamma_0(M)\), and \(\alpha\) is such that \(\omega(y \mod q) = y^\alpha, \forall y
	\in \mu_c, c = {\left| (\ZZ / q\ZZ)^* \right|}\). So, when \(\phi\) is trivial, we get a newform of trivial nebentypus.
\end{proof}

Hence, to study congruences of newforms in level \(2^s M\), it is enough to study congruences of modular forms in level \(M\). In
particular, for \(\ell = 2\), we get information about the even coefficients of a newform. We write the specialized version of the lemma for
\(\ell = 2\) in the following corollary for ease of reference.

\begin{corollary}
	\label{level_reduction_corollary}
	Let \(f\) be a newform of trivial nebentypus, and level \(2^s M\) where \(M\) is odd, and \(s \ge 0\).
	Then, there exists a newform \(f'\) of trivial nebentypus, and level \(M\) such that \(\modrep {f, 2} = \modrep {f', 2}\).
\end{corollary}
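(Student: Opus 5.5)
This corollary is the special case \(\ell = 2\) of \autoref{level_lemma}, so the plan is to check that its hypotheses hold and then quote it. With \(\ell = 2\), a level \(2^s M\) with \(M\) odd and \(s \ge 0\) satisfies the lemma's requirement \((M, \ell) = (M, 2) = 1\). Oddness of \(M\) also gives \(M \ge 1\), as the lemma requires, and \(f\) has trivial nebentypus by assumption. The lemma then produces a newform \(f'\) of trivial nebentypus and level \(M\) with \(\modrep{f,2} = \modrep{f',2}\). Here the subscript \(2\) means reduction modulo a prime \(\lambda\) above \(2\), as in the convention after \autoref{modrep_def}.

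The only point needing care is the case \(\ell = 2\) inside the lemma's proof. There the decomposition of \((\ZZ/q\ell^s M\ZZ)^*\) uses \(q = 4\) instead of \(q = \ell\). I would check that the factorization
\[(\ZZ/4\cdot 2^s M\ZZ)^* \cong (\ZZ/M\ZZ)^* \times (1+4\ZZ_2)/(1+2^{s+2}\ZZ_2) \times (\ZZ/4\ZZ)^*\]
still holds; it does, because \(\ZZ_2^* = \{\pm 1\} \times (1+4\ZZ_2)\). I would then check that the trivial character \(\phi\) splits as \(\psi\xi\omega\) with each factor trivial, in particular \(\psi\) trivial. It follows that the form produced by Hatada's argument lives on \(\ker \psi = \Gamma_0(M)\), i.e.\ has trivial nebentypus.

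I expect no step to be a real obstacle, since everything reduces to \autoref{level_lemma}. The one subtlety is the possibly shifted weight \(k + \alpha\) that appears in the lemma's proof. The statement here does not constrain the weight of \(f'\), so this causes no problem. If \(s = 0\), one can simply take \(f' = f\).
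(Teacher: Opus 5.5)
Your proposal is correct and takes the same approach as the paper, which obtains this corollary directly as the \(\ell = 2\) case of \autoref{level_lemma}, once \((M,2)=1\) and trivial nebentypus are checked. Your extra checks of the \(q = 4\) decomposition and of the triviality of \(\psi\) go a little beyond what the paper writes out, but they agree with its proof of the lemma.
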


By \autoref{level_reduction_corollary}, we have put some restrictions on the levels of modular forms that we need to study. Now, we would like
to put some restrictions on the weight.

Let \(f\) be a newform as in \autoref{pushdown_theorem}. If \(\modrep f\) be the associated representation for \(\ell=2\), then by
\autoref{level_reduction_corollary}, it is isomorphic to some \(\modrep {f'}\) where \(f'\) is a newform of level \(M\).
From \autocite[Definition 4.3 \& Theorem 4.5]{edixhoven_1992}, we further have that \(\modrep{f}\) arises from a newform of weight \(2\) when it is
irreducible, and from weight \(2\) or \(4\) when it is reducible. It is essentially a modified version of Serre's Conjecture.

So, in the irreducible case, we get part (a) of \autoref{pushdown_theorem}.

For the reducible case, we can refer to \autocite[Prop 3]{carayol_1989} and see that we can have weight \(2\) if we allow level \(2M\).
The only cases where it doesn't work is the case where the form is CM type mod 2 by \(\QQ(i)\). In this case, we get forms with Fourier coefficients \(a_p\)
congruent to \(0\) modulo \((i+1)\) for every prime \(p \equiv 3 \pmod{4}\). So, in this case, \(\mc E(f) \ge 0.5\). This concludes the proof
of \autoref{pushdown_theorem}.

Now, for primes \(M = 2,3,5\), and 7, there are no nontrivial cusp forms of weight 2. Hence, \autoref{small_primes_corollary} follows
summarily from \autoref{pushdown_theorem}.

\section{Computations and a Proof of \autoref{stat_theorem}}
We started by running some numerical calculations done using \autocite{sagemath}, which compute an estimate of the evenness of a newform.
These give us some idea about the possible values of evenness at levels \(N \le 100\).
We use labels from \autocite{lmfdb} to refer to specific newforms and algebraic curves.

In order to prove part (a) of \autoref{stat_theorem}, we only need to worry about the levels \(M\) where \(M\) is as listed in the theorem.
Also, using \autoref{pushdown_theorem}, we can limit ourselves to the weights \(2\) and \(4\).

The proofs for \(N=2, 3, 5, 7\) are already covered in \autoref{small_primes_corollary}. For \(N=9\), the only form that we need to consider is
\(9.4.a.a\). Using numerical calculations, we can easily estimate the evenness of this form to be \(1\). Now, we can use
\autocite[Prop 4.4]{ono_taguchi_2005} to see that this is indeed the case since the other two possibilities listed there must provide an
estimation strictly lower than \(1\) in our computation.

For \(N = 15\), the two forms of level \(4\), namely \(15.4.a.a\) and \(15.4.a.b\) are congruent to \(15.2.a.a\), verified using
\autocite[Theorem 9]{murty_1997}. Now, \(15.2.a.a\) has the same L-function as the elliptic curve \(15.a\). We can easily show computationally
that the elliptic curve has an evenness of \(1\), so that proves the evenness for the modular form as well.

To do it, we directly compute the action of the Galois group of the splitting field of the elliptic curve on its 2-torsion points. This
gives rise to a continuous Galois representation
\[\bar{\rho} : \absgalgrp \to GL_2(\FF_2).\]
This is isomorphic to the representation arising from the corresponding newform, by \autocite{serre_1987}.

This method works whenever we can attach an elliptic curve to a newform. Among the levels mentioned in part (a) \autoref{stat_theorem}, it works for
all the levels except for \(N = 41\). For level \(41\), we can verify that all the newforms with trivial nebentypus are congruent to the
newform \(82.2.a.a\), which can be associated to an elliptic curve, and hence be dealt with. This concludes the proof of part (a) of
\autoref{stat_theorem}.

For part (b), we use very similar methods. We can check all of them using elliptic curves, except for \(N = 29\).
For \(N = 29\), we use a different method. The form \(29.2.a.a\) is the only one of concern, but it is associated to the genus 2 curve \(841.a\).
So we can take a look at its mod \(2\) image as computed in LMFDB, and use \autocite[Theorem 1.10]{daas_2025} to conclude that \(\mc E(f) = \frac{2}{3}\).

For part (c), once again, we use similar methods as parts (a) and (b). We use elliptic curves for most, and genus \(2\) curves for \(N = 23,
31\). This concludes the proof of \autoref{stat_theorem}.

\section{Computational data}
\label{computational-data}
Apart from the levels listed in \autoref{stat_theorem}, we can calculate the evenness for the following newforms. The \checkmark and
\texttimes indicate whether the value was formally verified.
For the unverified forms, the numerical estimation included \(a_p\)s for the first \(5000\) prime numbers.

\begin{multicols}{3}
	\begin{multicoltab}{lcc}
		\mchead[\hline][\hline]{
			Newform & Evenness
		} \\
		49.2.a.a  & 1    \checkmark \\
		49.4.a.a  & 1    \checkmark \\
		49.4.a.b  & 1    \checkmark \\
		49.4.a.c  & 1/3  \checkmark \\
		49.4.a.d  & 1/3  \checkmark \\
		49.4.a.e  & 1    \checkmark \\
		53.2.a.a  & 2/3  \texttimes \\
		53.2.a.b  & 2/3  \texttimes \\
		53.4.a.a  & 2/3  \texttimes \\
		\multicoltabbreak \\
		53.4.a.b  & 0.89 \texttimes \\
		53.2.a.a  & 2/3  \texttimes \\
		59.2.a.b  & 2/3  \texttimes \\
		59.4.a.a  & 0.89 \texttimes \\
		59.4.a.b  & 0.89 \texttimes \\
		59.4.a.c  & 1    \texttimes \\
		61.2.a.a  & 2/3  \checkmark \\
		61.4.a.a  & 6/7  \checkmark \\
		61.4.a.b  & 1    \checkmark \\
		\multicoltabbreak \\
		67.2.a.a  & 2/3  \checkmark \\
		67.2.a.b  & 0.27 \texttimes \\
		67.2.a.c  & 0.27 \texttimes \\
		67.4.a.a  & 0.69 \texttimes \\
		67.4.a.b  & 0.90 \texttimes \\
		71.2.a.a  & 0.58 \texttimes \\
		71.2.a.b  & 0.58 \texttimes \\
		71.4.a.a  & 2/3  \checkmark \\
		71.4.a.b  & 0.85 \texttimes \\
		\multicoltabbreak \\
		71.4.a.c  & 1    \checkmark \\
		73.2.a.a  & 1    \checkmark \\
		73.2.a.b  & 0.26 \texttimes \\
		73.2.a.c  & 0.26 \texttimes \\
		73.4.a.a  & 1    \checkmark \\
		73.4.a.b  & 1    \checkmark \\
		73.4.a.c  & 1    \checkmark \\
		75.2.a.a  & 2/3  \checkmark \\
		75.2.a.b  & 1    \checkmark \\
		75.2.a.c  & 2/3  \checkmark \\
		75.4.a.a  & 1    \checkmark \\
		75.4.a.b  & 1    \checkmark \\
		75.4.a.c  & 1    \checkmark \\
		75.4.a.d  & 0.66 \texttimes \\
		75.4.a.e  & 0.66 \texttimes \\
		75.4.a.f  & 1    \checkmark \\
		79.2.a.a  & 2/3  \checkmark \\
		79.2.a.b  & 0.87 \texttimes \\
		79.4.a.a  & 0.67 \texttimes \\
		79.4.a.b  & 1    \checkmark \\
		79.4.a.c  & 1    \checkmark \\
		81.2.a.a  & 0.67 \texttimes \\
		81.4.a.a  & 0.78 \texttimes \\
		81.4.a.b  & 0.67 \texttimes \\
		81.4.a.c  & 0.67 \texttimes \\
		81.4.a.d  & 0.67 \texttimes \\
		81.4.a.e  & 0.78 \texttimes \\
		83.2.a.a  & 2/3  \checkmark \\
		83.2.a.b  & 0.66 \texttimes \\
		83.4.a.a  & 0.87 \texttimes \\
		83.4.a.b  & 0.87 \texttimes \\
		87.2.a.a  & 0.66 \texttimes \\
		87.2.a.b  & 1    \checkmark \\
		87.4.a.a  & 0.89 \texttimes \\
		87.4.a.b  & 0.89 \texttimes \\
		87.4.a.c  & 1    \checkmark \\
		87.4.a.d  & 1    \checkmark \\
		89.2.a.a  & 2/3  \checkmark \\
		89.2.a.b  & 1    \checkmark \\
		89.2.a.c  & 0.66 \texttimes \\
		\multicoltabbreak \\
		89.4.a.a  & 2/3  \checkmark \\
		89.4.a.b  & 1    \checkmark \\
		89.4.a.c  & 1    \checkmark \\
		89.4.a.d  & 1    \checkmark \\
		91.2.a.a  & 2/3  \checkmark \\
		91.2.a.b  & 2/3  \checkmark \\
		91.2.a.c  & 0.67 \texttimes \\
		91.2.a.d  & 1    \checkmark \\
		91.4.a.a  & 0.90 \texttimes \\
		91.4.a.b  & 1    \checkmark \\
		91.4.a.c  & 0.90 \texttimes \\
		91.4.a.d  & 1    \checkmark \\
		93.2.a.a  & 0.67 \texttimes \\
		93.2.a.b  & 1    \checkmark \\
		93.4.a.a  & 2/3  \checkmark \\
		93.4.a.b  & 0.67 \texttimes \\
		93.4.a.c  & 0.90 \texttimes \\
		93.4.a.d  & 1    \checkmark \\
		93.4.a.e  & 1    \checkmark \\
		93.4.a.f  & 1    \checkmark \\
		97.2.a.a  & 0.11 \texttimes \\
		97.2.a.b  & 1    \checkmark \\
		97.4.a.a  & 1    \checkmark \\
		97.4.a.b  & 1    \checkmark \\
		99.2.a.a  & 1    \checkmark \\
		99.2.a.b  & 1    \checkmark \\
		99.2.a.c  & 1    \checkmark \\
		99.2.a.d  & 2/3  \checkmark \\
		99.4.a.a  & 1    \checkmark \\
		99.4.a.b  & 1    \checkmark \\
		99.4.a.c  & 1    \checkmark \\
		99.4.a.d  & 0.66 \texttimes \\
		99.4.a.e  & 1    \checkmark \\
		99.4.a.f  & 1    \checkmark \\
		99.4.a.g  & 1    \checkmark \\
		101.2.a.a & 2/3  \checkmark \\
		101.2.a.b & 0.84 \texttimes \\
		101.4.a.a & 0.95 \texttimes \\
		101.4.a.b & 1    \checkmark \\
		103.2.a.a & 0.25 \texttimes \\
		\multicoltabbreak \\
		103.2.a.b & 0.71 \texttimes \\
		103.4.a.a & 0.89 \texttimes \\
		103.4.a.b & 1    \checkmark \\
		107.2.a.a & 0.25 \texttimes \\
		107.2.a.b & 0.75 \texttimes \\
		107.4.a.a & 0.91 \texttimes \\
		107.4.a.b & 0.91 \texttimes \\
		109.2.a.a & 2/3  \checkmark \\
		109.2.a.b & 0.13 \texttimes \\
		109.2.a.c & 0.72 \texttimes \\
		109.4.a.a & 0.89 \texttimes \\
		109.4.a.b & 1    \checkmark \\
		113.2.a.a & 1    \checkmark \\
		113.2.a.b & 1    \checkmark \\
		113.2.a.c & 0.11 \texttimes \\
		113.2.a.d & 0.11 \texttimes \\
		113.4.a.a & 1    \checkmark \\
		113.4.a.b & 1    \checkmark \\
		115.2.a.a & 2/3  \checkmark \\
		115.2.a.b & 0.67 \texttimes \\
		115.2.a.c & 1    \checkmark \\
		115.4.a.a & 1    \checkmark \\
		115.4.a.b & 2/3  \checkmark \\
		115.4.a.c & 0.62 \texttimes \\
		115.4.a.d & 0.88 \texttimes \\
		115.4.a.e & 0.96 \texttimes \\
		115.4.a.f & 1    \checkmark \\
		119.2.a.a & 0.59 \texttimes \\
		119.2.a.b & 1    \checkmark \\
		119.4.a.a & 1    \checkmark \\
		119.4.a.b & 1    \checkmark \\
		119.4.a.c & 1    \checkmark \\
		119.4.a.d & 1    \checkmark \\
		119.4.a.e & 1    \checkmark \\
		121.2.a.a & 2/3  \checkmark \\
		121.2.a.b & 2/3  \checkmark \\
		121.2.a.c & 2/3  \checkmark \\
		121.2.a.d & 2/3  \checkmark \\
		121.4.a.a & 1    \checkmark \\
		121.4.a.b & 0.68 \texttimes \\
		\multicoltabbreak \\
		121.4.a.c & 0.66 \texttimes \\
		121.4.a.d & 0.66 \texttimes \\
		121.4.a.e & 0.68 \texttimes \\
		121.4.a.f & 0.69 \texttimes \\
		121.4.a.g & 0.69 \texttimes \\
		121.4.a.h & 0.68 \texttimes \\
		123.2.a.a & 2/3  \checkmark \\
		123.2.a.b & 2/3  \checkmark \\
		123.2.a.c & 0.68 \texttimes \\
		123.2.a.d & 1    \checkmark \\
		123.4.a.a & 1    \checkmark \\
		123.4.a.b & 1    \checkmark \\
		123.4.a.c & 1    \checkmark \\
		123.4.a.d & 1    \checkmark \\
		125.2.a.a & 0.27 \texttimes \\
		125.2.a.b & 0.27 \texttimes \\
		125.2.a.c & 0.60 \texttimes \\
		125.4.a.a & 0.60 \texttimes \\
		125.4.a.b & 0.70 \texttimes \\
		125.4.a.c & 0.70 \texttimes \\
		125.4.a.d & 0.60 \texttimes \\
		127.2.a.a & 0.12 \texttimes \\
		127.2.a.b & 0.66 \texttimes \\
		127.4.a.a & 2/3  \checkmark \\
		127.4.a.b & 1    \checkmark \\
		127.4.a.c & 1    \checkmark \\
		129.2.a.a & 2/3  \checkmark \\
		129.2.a.b & 1    \checkmark \\
		129.2.a.c & 0.67 \texttimes \\
		129.2.a.d & 0.89 \texttimes \\
		129.4.a.a & 1    \checkmark \\
		129.4.a.b & 2/3  \checkmark \\
		129.4.a.c & 0.67 \texttimes \\
		129.4.a.d & 1    \checkmark \\
		129.4.a.e & 1    \checkmark \\
		129.4.a.f & 1    \checkmark \\
		131.2.a.a & 2/3  \checkmark \\
		131.2.a.b & 0.73 \texttimes \\
		131.4.a.a & 0.91 \texttimes \\
		131.4.a.b & 0.91 \texttimes \\
		\multicoltabbreak \\
		133.2.a.a & 0.26 \texttimes \\
		133.2.a.b & 0.26 \texttimes \\
		133.2.a.c & 0.26 \texttimes \\
		133.2.a.d & 1    \checkmark \\
		133.4.a.a & 1    \checkmark \\
		133.4.a.b & 0.71 \texttimes \\
		133.4.a.c & 0.71 \texttimes \\
		133.4.a.d & 1    \checkmark \\
		133.4.a.e & 1    \checkmark \\
		135.2.a.a & 2/3  \checkmark \\
		135.2.a.b & 2/3  \checkmark \\
		135.2.a.c & 0.67 \texttimes \\
		135.2.a.d & 0.67 \texttimes \\
		135.4.a.a & 2/3  \checkmark \\
		135.4.a.b & 2/3  \checkmark \\
		135.4.a.c & 2/3  \checkmark \\
		135.4.a.d & 2/3  \checkmark \\
		135.4.a.e & 0.89 \texttimes \\
		135.4.a.f & 0.67 \texttimes \\
		135.4.a.g & 0.67 \texttimes \\
		135.4.a.h & 0.89 \texttimes \\
		137.2.a.a & 0.05 \texttimes \\
		137.2.a.b & 1    \checkmark \\
		137.4.a.a & 1    \checkmark \\
		137.4.a.b & 1    \checkmark \\
		139.2.a.a & 2/3  \checkmark \\
		139.2.a.b & 0.12 \texttimes \\
		139.2.a.c & 0.71 \texttimes \\
		139.4.a.a & 0.66 \texttimes \\
		139.4.a.b & 0.88 \texttimes \\
		139.4.a.c & 0.88 \texttimes \\
		143.2.a.a & 2/3  \checkmark \\
		143.2.a.b & 0.62 \texttimes \\
		143.2.a.c & 1    \checkmark \\
		143.4.a.a & 0.96 \texttimes \\
		143.4.a.b & 0.99 \texttimes \\
		143.4.a.c & 1    \checkmark \\
		143.4.a.d & 1    \checkmark \\
		147.2.a.a & 1    \checkmark \\
		147.2.a.b & 2/3  \checkmark \\
		\multicoltabbreak \\
		147.2.a.c & 2/3  \checkmark \\
		147.2.a.d & 1    \checkmark \\
		147.2.a.e & 1    \checkmark \\
		147.4.a.a & 2/3  \checkmark \\
		147.4.a.b & 2/3  \checkmark \\
		147.4.a.c & 1    \checkmark \\
		147.4.a.d & 1    \checkmark \\
		147.4.a.e & 1    \checkmark \\
		147.4.a.f & 1    \checkmark \\
		147.4.a.g & 1    \checkmark \\
		147.4.a.h & 1    \checkmark \\
		147.4.a.i & 1    \checkmark \\
		147.4.a.j & 1    \checkmark \\
		147.4.a.k & 1    \checkmark \\
		147.4.a.l & 0.90 \texttimes \\
		147.4.a.m & 0.90 \texttimes \\
		149.2.a.a & 0.12 \texttimes \\
		149.2.a.b & 0.63 \texttimes \\
		149.4.a.a & 0.87 \texttimes \\
		149.4.a.b & 1    \checkmark \\
		151.2.a.a & 0.13 \texttimes \\
		151.2.a.b & 0.55 \texttimes \\
		151.2.a.c & 0.62 \texttimes \\
		151.4.a.a & 0.87 \texttimes \\
		153.2.a.a & 2/3  \checkmark \\
		153.2.a.b & 2/3  \checkmark \\
		153.2.a.c & 1    \checkmark \\
		153.2.a.d & 2/3  \checkmark \\
		153.2.a.e & 1    \checkmark \\
		153.4.a.a & 1    \checkmark \\
		153.4.a.b & 1    \checkmark \\
		153.4.a.c & 1    \checkmark \\
		153.4.a.d & 1    \checkmark \\
		153.4.a.e & 0.67 \texttimes \\
		153.4.a.f & 1    \checkmark \\
		153.4.a.g & 1    \checkmark \\
		153.4.a.h & 1    \checkmark \\
		153.4.a.i & 1    \checkmark \\
		157.2.a.a & 0.03 \texttimes \\
		157.2.a.b & 0.67 \texttimes \\
		\multicoltabbreak \\
		157.4.a.a & 0.85 \texttimes \\
		157.4.a.b & 1    \checkmark \\
		159.2.a.a & 0.61 \texttimes \\
		159.2.a.b & 1    \checkmark \\
		159.4.a.a & 2/3  \checkmark \\
		159.4.a.b & 2/3  \checkmark \\
		159.4.a.c & 0.88 \texttimes \\
		159.4.a.d & 0.96 \texttimes \\
		159.4.a.e & 1    \checkmark \\
		159.4.a.f & 1    \checkmark \\
		161.2.a.a & 1    \checkmark \\
		161.2.a.b & 0.67 \texttimes \\
		161.2.a.c & 1    \checkmark \\
		161.2.a.d & 1    \checkmark \\
		161.4.a.a & 1    \checkmark \\
		161.4.a.b & 1    \checkmark \\
		161.4.a.c & 1    \checkmark \\
		161.4.a.d & 1    \checkmark \\
		\multicoltabbreak \\
		163.2.a.a & 2/3  \checkmark \\
		163.2.a.b & 0.04 \texttimes \\
		163.2.a.c & 0.69 \texttimes \\
		163.4.a.a & 0.95 \texttimes \\
		163.4.a.b & 0.95 \texttimes \\
		167.2.a.a & 0.26 \texttimes \\
		167.2.a.b & 0.67 \texttimes \\
		167.4.a.a & 0.92 \texttimes \\
		169.2.a.a & 0.66 \texttimes \\
		169.2.a.b & 0.12 \texttimes \\
		169.2.a.c & 0.12 \texttimes \\
		169.4.a.a & 2/3  \checkmark \\
		169.4.a.b & 2/3  \checkmark \\
		169.4.a.c & 2/3  \checkmark \\
		169.4.a.d & 2/3  \checkmark \\
		169.4.a.e & 2/3  \checkmark \\
		169.4.a.f & 0.78 \texttimes \\
		169.4.a.g & 1    \checkmark \\
		\multicoltabbreak \\
		169.4.a.h & 0.33 \texttimes \\
		169.4.a.i & 0.66 \texttimes \\
		169.4.a.j & 0.78 \texttimes \\
		169.4.a.k & 0.61 \texttimes \\
		169.4.a.l & 0.61 \texttimes \\
		171.2.a.a & 1    \checkmark \\
		171.2.a.b & 2/3  \checkmark \\
		171.2.a.c & 2/3  \checkmark \\
		171.2.a.d & 2/3  \checkmark \\
		171.2.a.e & 1    \checkmark \\
		171.4.a.a & 1    \checkmark \\
		171.4.a.b & 1    \checkmark \\
		171.4.a.c & 1    \checkmark \\
		171.4.a.d & 2/3  \checkmark \\
		171.4.a.e & 1    \checkmark \\
		171.4.a.f & 0.89 \texttimes \\
		171.4.a.g & 1    \checkmark \\
		171.4.a.h & 1    \checkmark \\
		171.4.a.i & 1    \checkmark \\
	\end{multicoltab}
\end{multicols}

\section{Acknowledgements}
I would like to thank my advisor, Kimball Martin, for his invaluable guidance. I am grateful to him for carefully going through earlier versions
of this paper, and helping me improve on it. I also thank Preston Wake and Anna Medvedovsky for pointing me towards the relevant literature.

The calculations used in this paper were carried out using SageMath and Magma, so I would like to thank the maintainers of these projects.
The LMFDB was instrumental in both the observation that eventually led to this paper, and also throughout the research phase, and I am
grateful to the maintainers of that fantastic project as well.

\printbibliography

@article{hatada_1998,
    title = {On Classical and \(\ell\)\hyphen Adic Modular Forms of Levels \(N \ell^m\) and \(N\)},
  author = {Hatada, Kazuyuki},
  date = {2001-03-01},
  journaltitle = {Journal of Number Theory},
  volume = {87},
  number = {1},
  pages = {1--14},
  issn = {0022-314X},
  doi = {10.1006/jnth.2000.2583},
}

@article{serre_1987,
  title = {Sur Les Représentations Modulaires de Degré 2 de \\ \(Gal(\bar{\QQ}/\QQ)\)},
  author = {Serre, Jean-Pierre},
  date = {1987-01},
  journaltitle = {Duke Mathematical Journal},
  volume = {54},
  number = {1},
  pages = {179--230},
  publisher = {Duke University Press},
  issn = {0012-7094, 1547-7398},
  doi = {10.1215/S0012-7094-87-05413-5},
}

@article{edixhoven_1992,
  title = {The Weight in Serre's Conjectures on Modular Forms},
  author = {Edixhoven, Bas},
  date = {1992-12},
  journaltitle = {Inventiones Mathematicae},
  volume = {109},
  number = {1},
  pages = {563--594},
  issn = {0020-9910, 1432-1297},
  doi = {10.1007/BF01232041},
}

@incollection{ribet_1994,
  title = {Report on Mod \(\ell\) Representations of \(\absgalgrp\)},
  booktitle = {Proceedings of Symposia in Pure Mathematics},
  author = {Ribet, Kenneth A.},
  editor = {Jannsen, Uwe and Kleiman, Steven and Serre, Jean-Pierre},
  date = {1994},
  volume = {55.2},
  pages = {639--676},
  publisher = {American Mathematical Society},
  location = {Providence, Rhode Island},
  doi = {10.1090/pspum/055.2/1265566},
  isbn = {978-0-8218-1637-0 978-0-8218-9356-2},
}

@article{ono_taguchi_2005,
  title = {2-Adic Properties of Certain Modular Forms and Their Applications to Arithmetic Functions},
  author = {Ono, Ken and Taguchi, Yuichiro},
  date = {2005-03},
  journaltitle = {International Journal of Number Theory},
  shortjournal = {Int. J. Number Theory},
  volume = {01},
  number = {01},
  pages = {75--101},
  issn = {1793-0421, 1793-7310},
  doi = {10.1142/S1793042105000066},
}

@online{daas_2025,
  title = {Congruence Conditions for the Mod \(\lambda\) Values of the {{Fourier}} Coefficients of Classical Eigenforms},
  author = {Daas, Michael A.},
  date = {2025-06-10},
  eprint = {2506.08865},
  eprinttype = {arXiv},
  eprintclass = {math},
  doi = {10.48550/arXiv.2506.08865},
}

@inproceedings{swinnerton_dyer_1973,
  title = {On \(\ell\)-{{ADIC Representations}} and {{Congruences}} for {{Coefficients}} of {{Modular Forms}}},
  booktitle = {Modular {{Functions}} of {{One Variable III}}},
  author = {Swinnerton-Dyer, H. P. F.},
  editor = {Kuijk, Willem and Serre, Jean-Pierre},
  date = {1973},
  series = {Lecture {{Notes}} in {{Mathematics}}},
  pages = {1--55},
  publisher = {Springer},
  location = {Berlin, Heidelberg},
  doi = {10.1007/978-3-540-37802-0_1},
  isbn = {978-3-540-37802-0},
}

@inproceedings{swinnerton_dyer_1977,
  title = {On \(\ell\)-Adic Representations and Congruences for Coefficients of Modular Forms ({{II}})},
  booktitle = {Modular {{Functions}} of One {{Variable V}}},
  author = {Swinnerton-Dyer, H. P. F.},
  editor = {Serre, Jean-Pierre and Zagier, Don Bernard},
  date = {1977},
  series = {Lecture {{Notes}} in {{Mathematics}}},
  pages = {63--90},
  publisher = {Springer},
  location = {Berlin, Heidelberg},
  doi = {10.1007/BFb0063945},
  isbn = {978-3-540-37291-2},
}

@incollection{ono_1994,
  title = {Congruences on the {{Fourier}} Coefficients of Modular Forms on \(\Gamma_0(N)\)},
  booktitle = {Contemporary {{Mathematics}}},
  author = {Ono, Ken},
  editor = {Andrews, George E. and Bressoud, David M. and Parson, L. Alayne},
  date = {1994},
  volume = {166},
  pages = {93--105},
  publisher = {American Mathematical Society},
  location = {Providence, Rhode Island},
  doi = {10.1090/conm/166/01643},
  isbn = {978-0-8218-5173-9 978-0-8218-7757-9},
}

@incollection{deligne_1971,
  title = {Formes Modulaires et Représentations \(\ell\)-Adiques},
  booktitle = {Sémi\hyphen naire Bourbaki Vol. 1968/69 Exposés 347-363},
  author = {Deligne, Pierre},
  date = {1971},
  volume = {179},
  pages = {139--172},
  publisher = {Springer Berlin Heidelberg},
  doi = {10.1007/BFb0058810},
  isbn = {978-3-540-05356-9},
}

@article{carayol_1989,
  title = {Sur Les Représentations Galoisiennes modulo \(\ell\) Attachées Aux Formes Modulaires},
  author = {Carayol, Henri},
  date = {1989-12},
  journaltitle = {Duke Mathematical Journal},
  volume = {59},
  number = {3},
  pages = {785--801},
  publisher = {Duke University Press},
  issn = {0012-7094, 1547-7398},
  doi = {10.1215/S0012-7094-89-05937-1},
}

@incollection{murty_1997,
  title = {Congruences between Modular Forms},
  booktitle = {Analytic Number Theory},
  author = {Murty, M. Ram},
  editor = {Motohashi, Yoichi},
  date = {1997-10-16},
  edition = {1},
  pages = {309--320},
  publisher = {Cambridge University Press},
  doi = {10.1017/CBO9780511666179.020},
  isbn = {978-0-521-62512-8 978-0-511-66617-9},
}

@misc{lmfdb,
  shorthand    = {LMFDB},
  author       = {The {LMFDB Collaboration}},
  title        = {The {L}-functions and modular forms database},
  howpublished = {\url{https://www.lmfdb.org}},
  year         = {2026},
}

@misc{sagemath,
    shorthand    = {Sage},
    author       = {{The Sage Developers}},
    title        = {{S}age{M}ath, the {S}age {M}athematics {S}oftware {S}ystem},
    url          = {https://www.sagemath.org},
    version      = {9.5},
    year         = {2022},
}
\end{document}